\documentclass{amsart}
\usepackage{amsmath,amssymb,amsthm, tikz}
\usepackage{xcolor}

\newtheorem{theorem}{Theorem}

\newtheorem{conjecture}[theorem]{Conjecture}

\newtheorem{question}[theorem]{Question}
\theoremstyle{definition}

\newtheorem{example}[theorem]{Example}

\newtheorem{remark}[theorem]{Remark}

\newcommand{\co}{\colon\thinspace}
\newcommand{\eps}{\varepsilon}

\DeclareMathOperator{\Len}{Len}

\DeclareMathOperator{\Vol}{Vol}
\DeclareMathOperator{\diam}{diam}
\DeclareMathOperator{\UW}{UW}
\DeclareMathOperator{\HW}{HW}

\DeclareMathOperator{\Sc}{Sc}
\DeclareMathOperator{\rk}{rk}
\DeclareMathOperator{\supp}{supp}

\begin{document}

\title[Macroscopic scalar curvature and codimension 2 width]{Macroscopic scalar curvature and \\codimension 2 width}
\author{Hannah Alpert}
\address{Auburn University, 221 Parker Hall, Auburn, AL 36849}
\email{hcalpert@auburn.edu}
\author{Alexey Balitskiy}
\address{Institute for Advanced Study, 1 Einstein Drive, Princeton, NJ 08540, and Institute for Information Transmission Problems RAS, Bolshoy Karetny per. 19, Moscow, Russia 127994}
\email{abalitskiy@ias.edu}
\author{Larry Guth}
\address{Massachusetts Institute of Technology, 77 Massachusetts Ave, Cambridge, MA 02139}
\email{lguth@math.mit.edu}
\subjclass[2010]{53C23, 53C20}
\keywords{Urysohn width, macroscopic dimension, macroscopic scalar curvature, filling radius}

\begin{abstract}
We show that a complete $3$-dimensional Riemannian manifold $M$ with finitely generated first homology has macroscopic dimension $1$ if it satisfies the following ``macroscopic curvature'' assumptions: every ball of radius $10$ in $M$ has volume at most $4$, and every loop in every ball of radius~$1$ in $M$ is null-homologous in the concentric ball of radius~$2$.
\end{abstract}

\maketitle

\section{Introduction}

The $k$-dimensional \emph{Urysohn width} of a metric space $X$, denoted $\UW_{k}(X)$, is the infimal $\varepsilon$ such that there exists a continuous map $f \co X \rightarrow Y$, where $Y$ is a $k$-dimensional simplicial complex, for which each fiber $f^{-1}(y)$ has diameter at most $\varepsilon$. Informally, $X$ looks $(\le k)$-dimensional if one zooms out enough and disregards details at $\eps$-scale. In the case when $\diam X = \infty$, one defines the \emph{macroscopic dimension} of $X$ as the smallest nonnegative integer $k$ such that $\UW_k(X) < \infty$. The notions of width and macroscopic dimension appear in several works of Gromov in connection with various measures of ``largeness'' of a Riemannian manifold (such as hypersphericity, uniform geometric contractibility, small curvature, etc.). In particular, he formulated the following conjecture (see \cite[2.A]{gromov86}, \cite[G$_4$]{gromov88}, \cite[$2\frac12$]{gromov96}).

\begin{conjecture}[Gromov's scalar curvature conjecture]
Let $M$ be a complete Riemannian manifold of dimension $n$. If $M$ admits positive scalar curvature then $M$ has macroscopic dimension at most $n-2$. Moreover, if $\Sc(x) \ge \sigma^2 > 0$ for all $x \in M$, then $UW_{n-2}(M) < c_n/\sigma$, for some dimensional constant $c_n$.
\end{conjecture}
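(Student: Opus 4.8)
After rescaling the metric by a constant, the estimate $\UW_{n-2}(M) < c_n/\sigma$ reduces to the normalized case $\Sc \ge n(n-1)$, $\UW_{n-2}(M) \le c_n$, with $c_n$ a dimensional constant, and I would prove this by induction on $n$. The base case $n = 3$ — a complete $3$-manifold with $\Sc \ge 6$ has $\UW_1 \le c_3$ — is known (via geometrization for closed orientable manifolds, where each spherical-space-form and $S^2 \times S^1$ summand and each connecting neck has $\UW_1$ bounded, and via minimal-surface sweepout estimates more broadly), and it is the smooth positive-scalar-curvature counterpart of the macroscopic-hypothesis theorem of the present paper. The inductive step descends from dimension $n$ to dimension $n-1$ along a minimal hypersurface of Gromov $\mu$-bubble type, with the second-variation inequality carrying the scalar-curvature hypothesis to the cross-section.

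\textbf{Step 1 (reformulating the width bound).} I would replace ``$\UW_{n-2}(M) \le c_n$'' by the equivalent assertion that every continuous map from $M$ to an $(n-2)$-dimensional simplicial complex has a fiber of diameter at most $c_n$, and then argue by contradiction. A Lebesgue-type lemma (as in the study of Urysohn width) turns a failure of this into a ``wide slab'': a region $W \subseteq M$ equipped with a $1$-Lipschitz function $u \co W \to [0, 10 c_n]$ whose level sets are connected, mutually at distance $\ge c_n$, and whose union fills a region of definite size. This slab is the object through which the curvature hypothesis will produce a contradiction.

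\textbf{Step 2 (dimensional descent via $\mu$-bubbles).} From $u$ I would build a smooth function $h$ on $W$ that is comparable in size to $n-1$ and changes sign from one face $\{u = 0\}$ of the slab to the other, and minimize the $\mu$-bubble functional $\Omega \mapsto \Area(\partial\Omega) - \int_\Omega h$ over regions $\Omega$ trapped between the faces. The minimizer $\Sigma^{n-1}$ has mean curvature $h$ and separates the two faces; combining its stability inequality with the Gauss equation and $\Sc_M \ge n(n-1)$, together with the tuning of $h$ to that normalization, shows that a suitable conformal modification $g_\Sigma$ of the induced metric on $\Sigma$ satisfies $\Sc_{g_\Sigma} \ge (n-1)(n-2)$, while $\Sigma$, being a separating hypersurface inside the slab, itself carries a wide slab of comparable width (this last inheritance is the delicate point of the step). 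The inductive hypothesis gives $\UW_{n-3}(\Sigma, g_\Sigma) \le c_{n-1}$, while the inherited slab forces $\UW_{n-3}(\Sigma, g_\Sigma)$ to be of order $c_n$; choosing $c_n$ a large enough multiple of $c_{n-1}$ yields the contradiction, and hence $\UW_{n-2}(M) \le c_n$.

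\textbf{Main obstacle.} Two points carry essentially all the difficulty. First, regularity: the minimizing $\mu$-bubble $\Sigma$ may be singular once $n \ge 8$, which I expect to handle either by the generic-regularity perturbation of Schoen--Yau and Lohkamp — keeping the singular set of codimension $\ge 3$ and hence invisible to $\UW_{n-3}$-scale estimates — or, in the stubborn dimensions, by running a Callias/Dirac-index argument in place of the minimal-surface descent. Second, quantitative bookkeeping: to keep $c_n$ finite one must verify that each descent loses only a bounded factor in the width of the slab, which forces the function $h$ and the trapping region to be chosen with explicit constants and, most delicately, requires controlling the comparison between distances in the conformal metric $g_\Sigma$ and in the induced metric on $\Sigma$, and showing that a separating $\mu$-bubble genuinely inherits a wide slab rather than merely separating the two faces. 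Making the descent simultaneously quantitative and singularity-proof is exactly where the argument is hard — which is why the conjecture is still open for $n \ge 4$ — and a robust three-dimensional base case of the kind treated in this paper is precisely what such an induction rests on.
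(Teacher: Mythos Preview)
The statement you are attempting to prove is labeled a \emph{Conjecture} in the paper, and the paper offers no proof of it; immediately after stating it the authors say ``The conjecture is only established in dimension $3$ \ldots\ It is not even known if the codimension $1$ width could be bounded from above.'' So there is no ``paper's own proof'' to compare against, and any argument you give for general $n$ would resolve a well-known open problem.

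Your sketch is a recognizable heuristic (the $\mu$-bubble descent picture), and you correctly flag its two hard points, but the proposal does not close them, and at least one earlier step is garbled. In Step~1, $\UW_{n-2}(M)\le c_n$ means \emph{there exists} a map to an $(n-2)$-complex with all fibers of diameter $\le c_n$; this is not equivalent to ``every continuous map \ldots\ has a fiber of diameter at most $c_n$,'' and the passage from the negation of the width bound to a ``wide slab'' with a $1$-Lipschitz function whose level sets are connected and well separated is not a known Lebesgue-type lemma --- it is essentially assuming the kind of structure one is trying to produce. In Step~2, the genuine obstruction is exactly the clause you call ``the delicate point'': a separating $\mu$-bubble $\Sigma$ inside a wide slab of $M$ need not itself contain a wide slab of comparable size in any metric you control. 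Known $\mu$-bubble stability estimates bound the \emph{diameter} of $\Sigma$ (or of its components) under a scalar-curvature lower bound; they do not hand you a codimension-one-lower width obstruction on $\Sigma$ that you can feed back into the induction. Without that inheritance, the induction does not run, and your own closing paragraph concedes as much. In short: this is a plausible outline of why people believe the conjecture, not a proof, and the paper makes no claim to prove it either.
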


The conjecture is only established in dimension $3$ (see \cite{chodosh20, liokumovich20}), whereas in higher dimensions there are some partial results (see the discussion in the introduction of \cite{daher20}). It is not even known if the codimension $1$ width could be bounded from above.

Recall that the scalar curvature $\Sc(\cdot)$---the trace of the Ricci curvature tensor---is responsible for the volumetric behaviour of ``infinitesimally small'' balls:

\[
\Vol(B_r(x)) = \Vol(B_r^{\text{euc}}) - \Sc(x) \frac{r^{n+2}}{6(n+2)} + O(r^{n+4}).
\]
Here $B_r(x)$ denotes the ball of radius $r$ centered at $x$, and $\Vol(B_r^{\text{euc}})$ is the volume of the Euclidean ball of radius $r$. Therefore, the bigger the scalar curvature is, the smaller the volumes of small balls are. Taking this perspective, one can define the \emph{macroscopic scalar curvature} by measuring how small are the volumes of the balls of a certain fixed radius (as opposed to infinitesimal radius); see \cite[Section~3]{guth10} for the discussion of this metaphor. The following theorem from~\cite{guth17} can be viewed as a macroscopic cousin of a weaker form of Gromov's conjecture bounding the codimension $1$ width. 

\begin{theorem}[Macroscopic curvature and codimension $1$ width]\label{guth}
There exists a constant $\delta = \delta(n) > 0$ such that the following holds. Let $M$ be a closed Riemannian manifold of dimension $n$. If every ball of radius $1$ in $M$ has volume at most $\delta$, then $\UW_{n-1}(M) \leq 1$.
\end{theorem}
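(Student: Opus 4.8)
The plan is to realize the required map as a modification of the canonical map to the nerve of a carefully chosen cover of $M$, and then to whittle that nerve down to dimension $n-1$ using the volume hypothesis. The starting point is the standard observation that, for any open cover $\{U_i\}$ of $M$ by sets of diameter $\le 1$ with a subordinate partition of unity $\{\phi_i\}$, the canonical map $f=(\phi_i)_i\co M\to\mathcal N$ into the nerve has every fiber of diameter $\le 1$: a point of $\mathcal N$ lies in the interior of a unique simplex $\sigma$, and its preimage is contained in $\bigcap_{i\in\sigma}\supp\phi_i\subseteq\bigcap_{i\in\sigma}U_i$, hence inside a single $U_i$. So it suffices to find a good such cover and then to post-compose $f$ with a sequence of retractions that brings the image into an $(n-1)$-dimensional subcomplex without enlarging any fiber beyond diameter $1$.

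\emph{The good cover — the main obstacle.} Since $M$ is closed, I want a cover by metric balls $B(x_i,r_i)$ with every $r_i$ at most (and comparable to) a small dimensional constant — so in particular with all diameters $\le 1$ — and with multiplicity bounded by a dimensional constant $C(n)$. Producing such a cover with bounded multiplicity, and with members round enough to run the Lipschitz estimates below, is the crux and the step I expect to be the hardest: a generic cover of an $n$-manifold has multiplicity only $n+1$ at the price of very irregular members, which are useless here, so instead one selects the centers $x_i$ greedily à la Vitali and picks each radius from a fixed finite list of scales, applying a pigeonhole argument to find a scale that keeps the overlaps bounded. Granting this, the normalized bumps $\phi_i=\psi_i/\sum_j\psi_j$ with $\psi_i=(r_i-d(\cdot,x_i))^{+}$ are Lipschitz with a dimensional constant (radii bounded below, multiplicity bounded), and each $B(x_i,r_i)$ lies inside a unit ball, so $\Vol(B(x_i,r_i))\le\delta$.

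\emph{Whittling down to dimension $n-1$.} Now $f\co M\to\mathcal N$ is Lipschitz with a dimensional constant, $\dim\mathcal N\le C(n)-1$, and all fibers have diameter $\le 1$. I would delete the open simplices of dimension $\ge n$ one dimension at a time, from the top down. If $\sigma$ is a simplex of dimension $k\ge n$ that is currently a top cell, every point of the preimage $V_\sigma$ of $\operatorname{int}\sigma$ got there from a simplex that once contained $\sigma$, so $V_\sigma\subseteq\bigcap_{i\in\sigma}B(x_i,r_i)$ and $\Vol(V_\sigma)\le\delta$; moreover each point has been displaced at most once per dimension level and there are only boundedly many levels, so the current map is still Lipschitz with a dimensional constant on $V_\sigma$. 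Pushing a small-ball cover of $V_\sigma$ forward, one finds that the image of $V_\sigma$ has a $\rho$-neighbourhood of measure $O_n(\delta)$ inside $\operatorname{int}\sigma$; hence if $\delta=\delta(n)$ is small enough (with $\rho=\rho(n)$ a fixed fraction of the inradius of the standard simplex), the current map misses a ball $B(q_\sigma,\rho)\subseteq\operatorname{int}\sigma$ with $q_\sigma$ at distance $\ge\rho$ from $\partial\sigma$. Post-composing with the radial retraction of $\sigma$ away from $q_\sigma$ — which is Lipschitz with a dimensional constant on $\mathcal N\setminus B(q_\sigma,\rho)$, is the identity outside $\operatorname{int}\sigma$, and so does not disturb the other simplices — erases $\operatorname{int}\sigma$ from the image, and the fiber bound survives, because over a boundary point $w\in\operatorname{int}\tau$ with $\tau\subsetneq\sigma$ the new fiber lies in the preimage of the open star of $\tau$, which sits inside $\bigcap_{i\in\tau}B(x_i,r_i)$ and so has diameter $\le 1$. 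Iterating all the way down produces a map $M\to\mathcal N^{(n-1)}$ with all fibers of diameter $\le 1$, that is, $\UW_{n-1}(M)\le 1$.
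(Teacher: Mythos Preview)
The paper does not prove this theorem; it is quoted as background from~\cite{guth17}, so there is no ``paper's own proof'' to compare against. That said, your overall architecture --- map to a nerve and then perform a Federer--Fleming style push-off, using the small-volume hypothesis to find a missed ball in each high-dimensional simplex --- is indeed the strategy of Guth's original argument.

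The genuine gap is in the ``good cover'' step, which you correctly flag as the crux but then dismiss with ``Vitali plus pigeonhole on a fixed finite list of scales.'' That does not work under the hypothesis alone: the paper's own Example~\ref{ex-content} (a $3$-manifold fibered over a trivalent tree with edge length $\delta\ll 1$ and tiny ellipsoidal fibers) satisfies the small-unit-ball hypothesis, yet for any fixed $r\in(\delta,1)$ a maximal $r$-separated net yields balls of radius $2r$ with multiplicity on the order of $2^{r/\delta}$. So no dimensional bound on the multiplicity is available from a fixed or bounded-ratio list of radii, and without it you lose both the bound on $\dim\mathcal N$ (so the number of push-off levels, hence the accumulated Lipschitz constant, is uncontrolled) and the Lipschitz bound on the partition of unity. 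Guth's actual fix is to let the radii vary with the local volume growth: at each center $p$ one uses a pigeonhole on the sequence $\Vol(B(p,2^{-k}))$ to find a scale $r(p)\le 1$ at which $\Vol(B(p,2r(p)))\le C_n\Vol(B(p,r(p)))$, and covers by these ``good balls.'' The doubling at the chosen scale is exactly what yields a dimensional multiplicity bound via a volume-packing count, and the rest of your push-off argument then goes through. Your sketch would be complete once this cover construction is replaced by the volume-adapted one.
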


The assumption of this theorem is not sufficient to bound the codimension $2$ width. Indeed, consider the Riemannian metric on $S^3$ obtained by taking a huge disk $D^2(R)$ in $\mathbb{R}^4$, and taking the boundary of a small neighborhood to obtain $S_\varepsilon(D^2(R))$.  This product has $1$-width of order $R$, but the unit balls have volume of order $\eps$; for example, the typical unit ball looks like $D^2(1) \times S^1(\varepsilon)$.  One approach to macroscopic scalar curvature, designed to rule out examples somewhat like this one, is to assume bounds on the volume of unit balls \emph{in the universal cover} rather than the manifold itself (approach taken in~\cite{guth10, alpert17, alpert17+, sabourau20}).  However, this example is already simply connected, even though its unit balls are not.  In order to exclude this example, we would have to consider the universal covers of balls of unit or slightly larger radius, and then require bounds on the volumes of unit balls taken inside these universal covers of balls.

It is an open question whether this approach involving universal covers of balls can work, which we state as Conjecture~\ref{conj-balls} in the open questions section.  For our result in this paper, our simpler approach is to exclude the example $S_\varepsilon(D^2(R))$ and bound the codimension $2$ width by imposing an additional constraint on the metric: every short loop bounds in its neighborhood of controlled size. This is a form of the acyclicity control assumption: the homomorphism $H_1(B_1(x)) \to H_1(B_2(x))$ is trivial for all $x \in M$. The two conditions---small volume of balls plus the acyclicity control---constitute our interpretation of ``positive macroscopic scalar curvature''. 

\begin{conjecture}[Macroscopic curvature and codimension $2$ width]\label{conj-main}
There exist positive dimensional constants $\delta_n$, $C_n$, and $R_n$ such that the following holds. Let $M$ be a closed Riemannian manifold of dimension $n$.  Suppose that every ball of radius $R_n$ in $M$ has volume at most $\delta_n$, and that every loop in every ball of radius~$1$ in $M$ is (integrally) null-homologous in the concentric ball of radius~$2$.  Then $\UW_{n-2}(M) \leq C_n$.
\end{conjecture}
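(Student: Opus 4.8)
The plan is to produce a continuous map from $M$ to a graph (a $1$-dimensional complex) all of whose fibers have diameter bounded by a constant depending only on $n$; since $\diam M$ may be infinite, this boundedness is the entire content. I would carry this out in two stages. \emph{Stage one} uses only the volume hypothesis to build a map $f\co M\to P$ to a $2$-complex with fibers of diameter $\le C'_n$: this is a ``codimension $1$'' statement of the type of Theorem~\ref{guth}, adapted from closed to complete $M$ by running the argument of Theorem~\ref{guth} over an exhaustion of $M$ by bounded pieces and patching the local $2$-complexes. The volume bound on $R_n$-balls (after rescaling so the threshold $\delta_n$ of Theorem~\ref{guth} is met --- and, if the constants demand it, a preliminary iteration to pass from $R_n$-balls down to unit balls) controls the dimension and fibers of the local pieces, while finite generation of $H_1(M)$ is what keeps the patched target $P$ of finite topological type. \emph{Stage two} then uses the acyclicity hypothesis to collapse $P$ onto a graph, enlarging fibers only by a bounded factor.

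For Stage two the key is to transport the acyclicity of $M$ to $P$. Because the fibers of $f$ are small, a short loop in $P$ pulls back to a loop inside a ball of bounded radius in $M$; by hypothesis that loop bounds in a slightly larger ball, and pushing the filling chain forward by $f$ shows the loop bounds a $2$-chain supported on a subcomplex of $P$ of bounded diameter. Thus $P$ is uniformly $H_1$-acyclic at the working scale. One must also see that $P$ has no macroscopic $2$-cycle: such a cycle would be carried by a coarse closed surface $\Sigma\subset M$, and since $M$ has no boundary and its $R_n$-balls have small volume, $\Sigma$ can be neither a non-separating surface spread out to infinity (that is the excluded example $S^2(R)\times\mathbb{R}$, which would force too much volume into some ball) nor a bounded surface encircling a handle (that is $S^2(R)\times S^1(\varepsilon)$, whose short meridian would then fail to bound) --- so $\Sigma$ bounds a region of bounded diameter and the cycle was not macroscopic. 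A $2$-complex that is uniformly $H_1$-acyclic and macroscopically $H_2$-trivial can be collapsed onto a graph: one uses the filling chains to remove the $2$-cells of $P$ one at a time, doing every move compatibly with $f$ and, by the finite-type hypothesis, in a locally finite way that terminates. This yields a map $M\to\Gamma$ to a graph with fibers of diameter $\le C_n$, i.e.\ $\UW_{n-2}(M)\le C_n$. (For $n>3$ the surface $\Sigma$ must be replaced by a codimension-one cycle and the collapse becomes an $(n-2)$-fold iterated descent.)

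The step I expect to be the genuine obstacle --- and the reason Conjecture~\ref{conj-main} is open --- is the quantitative heart of Stage two. The acyclicity hypothesis is purely existential and homological (certain integral $1$-cycles merely bound), whereas collapsing $P$ with control on fiber growth needs \emph{size} estimates on the filling $2$-chains; extracting these sends one back to the volume hypothesis to bound the minimal area of a chain filling a short loop inside a small ball, and since filling a $2$-cell creates new loops that must themselves be filled, the process must be iterated with the accumulated loss carefully tracked. This is a macroscopic analogue of the Schoen--Yau slicing descent, and it is precisely here that finite generation of $H_1(M)$ is needed to guarantee termination. A more routine but still real obstacle is Stage one for complete $M$: the patching must keep fibers uniformly small across overlaps, which relies on the bounded geometry furnished by the volume bound together with finiteness of $H_1$. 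In dimension $3$ these difficulties are surmountable because surfaces are tractable; in higher dimensions the descent must be run $n-2$ times, and controlling the loss at each stage is what keeps the general conjecture out of reach.
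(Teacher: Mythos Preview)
The statement you address is Conjecture~\ref{conj-main}, which the paper leaves \emph{open}; there is no proof in the paper to compare against. What the paper does prove is the weaker Theorem~\ref{thm-main} (via Theorem~\ref{thm-3dim}), only for $n=3$ and with the bound depending on the first Betti number $\beta$. You yourself concede in your final paragraph that your outline has a ``genuine obstacle'' you do not resolve, so what you have written is a strategy sketch, not a proof.

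That strategy is also quite different from the paper's method for the weaker result. The paper never builds a map to an auxiliary $2$-complex and collapses it. Instead it shows directly that every component of every level set $S_t(p)$ has diameter at most $12L(\beta+1)$: if some component were too large, one could find $\beta+2$ pairwise far-apart points on it, join them to $p$ by minimizing geodesics, and form $\beta+1$ triangles whose classes in $H_1(M;\mathbb{Q})$ must be linearly dependent, giving a null-homologous $1$-cycle $c$ in which some geodesic $\eta_{i^*}$ appears with nonzero coefficient. A coarea estimate on $d(x_{i^*},\cdot)\times d(\eta_{i^*},\cdot)$ then produces a short $1$-cycle $\Gamma$ bounding a surface patch $\Sigma$ that meets $\eta_{i^*}$ once; the acyclicity hypothesis caps $\Gamma$ off away from $c$, yielding a closed $2$-cycle with nonzero intersection number with the null-homologous $c$, a contradiction. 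The dependence on $\beta$ enters precisely at the linear-dependence step, and removing it is exactly what remains open.

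Beyond the gap you already flag, a few concrete slips in your sketch: you write ``since $\diam M$ may be infinite,'' but in Conjecture~\ref{conj-main} the manifold $M$ is closed, so $\diam M<\infty$ always; the content is that $C_n$ is \emph{uniform} over all $M$. In Stage two you speak of a short loop in $P$ ``pulling back'' to $M$, but $f\co M\to P$ has no section; at best you can coarsely lift, and then the pushforward of a filling chain bounds the image of your lift, not the original loop in $P$. Finally, your claim that $P$ has no macroscopic $2$-cycle because a large coarse surface in $M$ ``would force too much volume into some ball'' is not justified by the hypotheses: small volume of $R_n$-balls does not by itself preclude large-area surfaces in $M$.
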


The two-dimensional case of this conjecture is a simple exercise whose proof is similar to the first part of the proof of~\cite[Proposition~6]{alpert17+}. We are able to prove the following weaker version of the three-dimensional case of this conjecture, with $C$ depending on the first Betti number of the manifold. 


\begin{theorem}\label{thm-main}
Let $M$ be a complete orientable $3$-dimensional Riemannian manifold, and let $\beta = \dim H_1(M; \mathbb{Q})$ be its first Betti number. Suppose that every ball of radius $10$ in $M$ has volume at most $4$, and that every loop in every ball of radius~$1$ in $M$ is (integrally) null-homologous in the concentric ball of radius~$2$.  Then $\UW_1(M) \leq 24(\beta+1)$.
\end{theorem}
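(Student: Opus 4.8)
The plan is to bound $\UW_1(M)$ by building a map to a graph whose fibers have controlled diameter, and the strategy splits according to the volume growth of large balls. First I would try to show that either $M$ has a ball of radius $\sim 10(\beta+1)$ whose volume is very small relative to what one would need to "see" two-dimensional spreading, or else one can find a multiplicity-$1$ sweepout argument. More concretely, the acyclicity control hypothesis (the map $H_1(B_1(x)) \to H_1(B_2(x))$ is trivial) together with the volume bound $\Vol B_{10}(x) \le 4$ should force the balls of radius $1$ to behave like they are filling-theoretically small: every $1$-cycle supported in $B_1(x)$ bounds a $2$-chain in $B_2(x)$ whose area is controlled by the volume bound via a Federer--Fleming / cone inequality type estimate. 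This is the local input that replaces the pointwise scalar curvature bound in Gromov's conjecture.

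The main construction I would carry out is a \emph{Schoen--Yau / minimal hypersurface style} or, in the macroscopic setting, a \emph{Guth-type bend-and-cancel} sweepout. Using Theorem~\ref{guth} (macroscopic codimension $1$ width) as a black box, after rescaling one already gets a map $f \co M \to Y^2$ to a $2$-complex with fibers of diameter $O(1)$, provided the unit-ball volumes are small enough; but our volume bound is $\le 4$, not $\le \delta(3)$, so the first step is to pass to balls of a slightly larger radius $r_0 = O(1)$ where the normalized volume condition of Theorem~\ref{guth} is met, at the cost of enlarging the fiber-diameter bound to a universal constant. The real work is then to cut the $2$-complex $Y$ down to a graph. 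Here is where the Betti number enters: I would pull back the acyclicity hypothesis to conclude that $H_1$ of each fiber $f^{-1}(y)$ dies in a bounded neighborhood, so the "vertical" homology of the fibration is controlled, and then a Mayer--Vietoris / spectral sequence count shows that the obstruction to collapsing $Y^2$ onto a $1$-complex is measured by a rank-$\le \beta$ piece of $H_2$. One removes $\beta$ many $2$-cells (or dually, performs $\beta$ surgeries), each time localized in a region of diameter $O(1)$, to reduce to a graph; each removal costs an additive $O(1)$ in the width, giving the linear dependence $24(\beta+1)$.

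In slightly more detail, the intermediate steps would be: (1) choose a maximal $1$-separated net, build the nerve-type map to a $2$-complex of controlled local complexity using the volume bound to bound the number of net points in any ball; (2) invoke (a rescaled version of) Theorem~\ref{guth} to get fibers of controlled diameter mapping to this $2$-complex; (3) use the acyclicity control to produce, for each $2$-cell of the target whose preimage "separates nontrivially," a filling that lets you homotope the map off that cell — this uses finite generation of $H_1(M)$ to guarantee the process terminates after $\le \beta$ (plus an $O(1)$) steps; (4) track the diameter increase at each step and sum. The constant $24$ is just bookkeeping: each of the $\beta$ surgeries and the base case contribute comparable universal amounts coming from the radius-$2$ acyclicity neighborhoods and the covering multiplicity forced by $\Vol B_{10} \le 4$.

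The hard part, I expect, is step (3): controlling \emph{where} the fillings live and ensuring that the homotopies used to push the map off successive $2$-cells do not blow up the diameters of other fibers. The acyclicity hypothesis only gives a filling of a loop in $B_2$ of the same loop in $B_1$, with \emph{no} a priori area bound; so one must couple it with the volume constraint $\Vol B_{10}(x)\le 4$ to get a genuinely quantitative filling (an isoperimetric inequality at scale $\le 10$), and then argue that finitely many such localized fillings can be performed simultaneously, or in sequence, with only additive cost. This is exactly the place where the present argument is forced to pay a factor of $\beta$ rather than a universal constant — controlling infinitely many potential cancellations uniformly is what Conjecture~\ref{conj-main} asks for and what this theorem does not achieve. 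I would also need orientability (hypothesis in the theorem) to make the relevant $2$-chains and Poincaré–Lefschetz duality on balls work integrally.
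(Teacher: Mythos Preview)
Your approach is genuinely different from the paper's, and the core of it---step~(3)---is a real gap rather than a technical detail.  The paper does \emph{not} build a map to a $2$-complex and then collapse; it maps $M$ directly to $\mathbb{R}$ via the distance function $d(p,\cdot)$ from a single point $p$, and proves that each connected component of each level set $S_t(p)$ has diameter at most $12L(\beta+1)$ (here $L=2$).  The argument is by contradiction and intersection number: if some level set component is too wide, one picks $\beta+2$ points $x_0,\dots,x_{\beta+1}$ along it that are pairwise $>12L$ apart, joins each to $p$ by a minimizing geodesic $\eta_i$, and forms $\beta+1$ triangles.  A nontrivial $\mathbb{Z}$-combination $c$ of these is null-homologous in $H_1(M;\mathbb{Q})$, with some $\eta_{i^*}$ appearing with nonzero coefficient.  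Then the coarea inequality, applied to $(d(x_{i^*},\cdot),d(\eta_{i^*},\cdot))$ on a ball of radius $5L$ and volume $<\varepsilon L^2$, produces a level curve $\Gamma$ of length $<2\varepsilon$ near $\eta_{i^*}$; the acyclicity hypothesis caps $\Gamma$ by a $2$-chain in its $L$-neighborhood; and the resulting $2$-cycle meets $c$ exactly once (along $\eta_{i^*}$), contradicting $[c]=0$.  No nerve, no Theorem~\ref{guth}, no surgery on a target complex.

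Your step~(3) is where your outline breaks down.  The claim that ``the obstruction to collapsing $Y^2$ onto a $1$-complex is measured by a rank-$\le\beta$ piece of $H_2$'' is not justified: the target $Y$ is an auxiliary nerve, not $M$, and there is no mechanism tying the number of essential $2$-cells of $Y$ to $\beta=\dim H_1(M;\mathbb{Q})$.  A map $M\to Y^2$ with small fibers says nothing a priori about how many $2$-cells of $Y$ carry nontrivial relative cycles of $M$, nor does the local acyclicity hypothesis (loops in $B_1$ bound in $B_2$, with \emph{no} area control) let you homotope $f$ off a $2$-cell with only $O(1)$ fiber-diameter increase---you would need a uniform filling radius bound for $1$-cycles in preimages, which you do not have.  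Your remark that one can ``couple'' acyclicity with $\Vol B_{10}\le 4$ to get a quantitative isoperimetric inequality is wishful: the volume bound on balls does not by itself bound areas of $2$-chains.  By contrast, in the paper's argument the volume bound is used only through coarea to find one short curve, and the acyclicity hypothesis is invoked exactly once to fill that curve within radius $L$---no area estimate is ever needed.  Finally, note that Theorem~\ref{guth} is stated for closed $M$, while Theorem~\ref{thm-main} is for complete $M$, so even your first black-box step needs care.
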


Conjecture~\ref{conj-main} is a macroscopic cousin of Gromov's conjecture, but none of the two implies the other. For one direction, one can take any space to which Conjecture~\ref{conj-main} applies and perturb its metric in a small vicinity of a single point to make the scalar curvature negative, but without destroying the assumption on macroscopic curvature. For the other direction, take any closed non-simply-connected manifold with positive scalar curvature, and scale it down so that one of the nontrivial loops fits in a ball of radius $1$; then Theorem~\ref{thm-main} does not apply.

Another way to estimate the codimension $2$ width is given by a theorem of Liokumovich, Lishak, Nabutovsky, and Rotman~\cite{liokumovich21}; they proved that such a bound holds if the $(n-1)$-dimensional Hausdorff content of all unit balls is small (for a short proof, see~\cite{papasoglu20}). We remark that their theorem does not imply Theorem~\ref{thm-main}, and exhibit a manifold to which their theorem does not apply while our theorem does.

\begin{example}\label{ex-content} 
To construct a $3$-manifold satisfying the hypothesis of our main theorem, in which typical unit balls have the largest possible Hausdorff $2$-content (that is, $1$), fix parameters $\eps \ll \delta \ll 1 \ll D$. Consider a prototype ``tube'' $E\times[0,\delta]$, where $E$ is the ellipsoid $\left\{(x,y,z) \in \mathbb{R}^3 ~\middle\vert~ \frac{x^2}{\eps^2} + \frac{y^2}{\eps^2} + \frac{z^2}{\delta^2} = 1 \right\}$ with the Riemannian metric inherited from the Euclidean one. We start from a model graph $G$, and make a manifold out of tubes in the following way: to each edge of $G$ we assign a copy of the prototype tube, and at each vertex we glue the tubes along their corresponding boundaries. The gluing can be organized as follows: the edges adjacent to a given vertex are arranged cyclically, and the boundary part $\{(x,y,z) \in E ~\vert~ z \ge 0\}$ of a tube is glued to the boundary part $\{(x,y,z) \in E ~\vert~ z \le 0\}$ of the next tube by changing the sign of $z$. If there was just one adjacent edge, the boundary of the corresponding tube is stitched up by identifying $(x,y,z) \in E$ with $(x,y,-z)$. As for the model graph, consider the infinite degree $3$ tree, and take the induced subgraph on the vertices whose edge-distance from a fixed vertex is at most $D/\delta$. This way we obtain a closed $3$-manifold $M$ with the following properties: its diameter is of order $D$; its Urysohn $1$-width is of order $\delta$ (as witnessed by the evident map to the model graph $G$); the volume of any unit ball is at most of order $2^{1/\delta} \delta^2 \eps$ (this can be made arbitrarily small by picking $\eps$ depending on $\delta$); every loop in a unit ball can be contracted to a point while staying in its size $\approx \delta$ neighborhood; finally, the $2$-content of some unit balls is exactly $1$. Here is an informal estimate of the $2$-content of the typical unit ball, whose image in $G$ does not meet the vertices of $G$ of degree $1$. One can cover it by a single ball of radius $1$. Alternatively, one can cover it by balls of radius $\approx \delta$, then $\approx 2^{1/\delta}$ balls are needed, and the bound for content is $2^{1/\delta} \delta^2 \gg 1$. With even smaller balls one will still need to cover an embedded locally-almost-Euclidean surface of area $\approx 2^{1/\delta} \delta^2$. Roughly speaking, our manifold, fibered over the infinite tree, behaves ``akin to the hyperbolic plane'' at scales $\gtrsim \delta$, and the bigger balls we take the better bound for the $2$-content we get.
\end{example}

\emph{Acknowledgments.}  This material is based in part upon work supported by the National Science Foundation under Grant No. DMS-1926686 and Grant No.~DMS-1802914. Larry Guth is supported by Simons Investigator Award.
We thank Aleksandr Berdnikov, Yevgeny Liokumovich, and St\'ephane Sabourau for motivating discussions.

\section{Proof of the main result}\label{sec-3dim}

We use the following notation: $d(\cdot,\cdot)$ is the distance function; $B_r(A)$ stands for the open $r$-neighborhood of a set $A$; $S_r(A)$ denotes the boundary of $B_r(A)$, or the set of points at distance $r$ from $A$.

Theorem~\ref{thm-main} follows from the following result with $L=2$, $\varepsilon = 1$.

\begin{theorem}\label{thm-3dim}
For any $L > \varepsilon > 0$, there exists $\delta = \varepsilon L^2$ such that the following holds.  Let $M$ be a complete orientable $3$-dimensional manifold, such that every ball of radius $5L$ has volume less than $\delta$, and such that every loop contained in a ball of radius $\varepsilon$ is integrally null-homologous in the concentric ball of radius $L$.  Let $\beta = \rk H_1(M; \mathbb{Z})= \dim H_1(M; \mathbb{Q})$.  Then for any point $p \in M$, each connected component of each level set of $d(p, \cdot)$ has diameter at most $12L(\beta + 1)$.
\end{theorem}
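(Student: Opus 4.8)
The plan is to argue by contradiction. Suppose that for some $p\in M$, some $t>0$, and some connected component $\Sigma$ of the level set $\{d(p,\cdot)=t\}$ we have $\diam_M\Sigma>12L(\beta+1)$; the goal is to manufacture from $\Sigma$ more than $\beta$ linearly independent classes in $H_1(M;\mathbb{Q})$. After a routine approximation (replace $d(p,\cdot)$ by a Morse function $\eps/100$-close to it, so that $\Sigma$ becomes a smooth closed surface, orientable because $M$ is orientable and $\Sigma$ is two-sided), I would first extract a ``long spine'': choose a path $\gamma\co[0,1]\to\Sigma$ with $d_M(\gamma(0),\gamma(1))>12L(\beta+1)$, let $\phi(u):=d_M(\gamma(0),\gamma(u))$, and pick $0=u_0<u_1<\dots<u_{\beta+1}$ with $\phi(u_j)=12Lj$; by the triangle inequality each sub-arc $\gamma|_{[u_{j-1},u_j]}$ has endpoints at mutual distance at least $12L$, hence $d_M$-diameter at least $12L$.

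The heart of the matter is to force each of these $\beta+1$ sub-arcs to ``pay'' with a homology class, and here the two hypotheses should enter at their matching scales. A natural framework is the maximal free abelian cover $\pi\co\widehat M\to M$, with deck group $\mathbb{Z}^\beta=H_1(M;\mathbb{Z})/\mathrm{tors}$: a loop in $M$ is rationally essential precisely when it lifts to a non-closed path, so ``progress in the $\mathbb{Z}^\beta$ direction'' is a rank-$\beta$ quantity. The volume hypothesis survives verbatim for embedded $5L$-balls of $\widehat M$ (the covering is a local isometry); combined with the acyclicity hypothesis it should force the level surfaces of distance functions in $\widehat M$ to be ``thin'' in the sense that their level circles sit in $\eps$-balls and hence, by acyclicity, bound small disks inside the concentric $L$-balls. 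Lifting $\gamma$, I would argue a dichotomy for each sub-arc: either it drags its lift a nonzero amount along $\mathbb{Z}^\beta$ — and by rank at most $\beta$ of them can do so independently — or its lift stays in a region where the strengthened acyclicity is available, and there a Guth-type width-versus-volume estimate (the codimension-$1$ mechanism behind Theorem~\ref{guth}, made quantitative with the explicit bound $\delta=\eps L^2$ and aided by the small-disk fillings to cap off level circles) bounds the $d_M$-diameter of any connected level-surface piece by less than $12L$, contradicting the lower bound for that sub-arc. Since all $\beta+1$ sub-arcs have $d_M$-diameter at least $12L$, more than $\beta$ of them must make independent $\mathbb{Z}^\beta$-progress, which is absurd. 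This contradiction gives the diameter bound, and Theorem~\ref{thm-main} follows by taking $L=2$, $\eps=1$ and mapping $M$ to the Reeb graph of $d(p,\cdot)$, whose fibers are precisely the connected components of its level sets.

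I expect the genuinely hard part to be this middle step: converting ``small volume of $5L$-balls plus loops null-homologous at scale $\eps\to L$'' into an honest upper bound for the $d_M$-diameter of a level-surface component. The delicate points are: (i) choosing the cut levels so that the level circles one wants to cap are actually thin enough to lie in $\eps$-balls — this should follow from the volume bound via the coarea formula inside $5L$-balls, but needs a real argument; (ii) keeping the capping surgeries inside $L$-balls so the diameter accounting is not spoiled; and (iii) organizing the count on the free abelian cover — most cleanly, I expect, by induction on $\beta$: compress away a sub-arc that has already spent its class, then re-run the thin-level-surface estimate on the remainder — so that it comes out to exactly $\beta+1$ units of size $12L$. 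One could instead try to avoid the cover and build the $\beta+1$ classes directly as $1$-cycles localized near the successive sub-arcs, but then the crux shifts to proving their linear independence rather than mere nontriviality.
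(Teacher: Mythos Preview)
Your setup---picking $\beta+2$ points $x_0,\dots,x_{\beta+1}$ along a path in the offending level set, spaced more than $12L$ apart---matches the paper, but from there the argument diverges and your version has a real gap. The mechanism you propose is a dichotomy on the free abelian cover: each sub-arc either ``makes $\mathbb{Z}^\beta$-progress'' or falls under a ``Guth-type width-versus-volume estimate'' forcing diameter $<12L$. Neither branch is on solid ground. For the first, sub-arcs of a path do not by themselves define homology classes; the lift of $\gamma$ is a single continuous path in $\widehat M$, so the ``progress vectors'' of the sub-arcs simply concatenate, and there is no reason any of them should be nonzero, let alone independent. For the second, the estimate you need is essentially the $\beta=0$ case of the theorem itself, and the codimension-$1$ mechanism behind Theorem~\ref{guth} does not supply it---bounding the diameter of a $2$-dimensional level surface in a $3$-manifold is the codimension-$2$ statement that is the whole point. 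Your last paragraph's alternative (build $\beta+1$ localized $1$-cycles and prove them independent) also goes the wrong direction.

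The missing idea is this: connect each $x_i$ back to $p$ by a minimizing geodesic $\eta_i$, forming $\beta+1$ ``triangles'' $\eta_{i-1}\cup\gamma_{[x_{i-1},x_i]}\cup\eta_i^{-1}$. These are \emph{dependent} in $H_1(M;\mathbb{Q})$, so some nontrivial $\mathbb{Z}$-combination $c$ is null-homologous, and some spoke $\eta_{i^*}$ survives in $c$ with nonzero coefficient. Now build a closed $2$-cycle that links $c$ exactly once, along $\eta_{i^*}$: take the piece of the sphere $S_{4L+\ell_1}(x_{i^*})$ lying in the tube $B_{L+\ell_2}(\eta_{i^*})$, use coarea in $B_{5L}(x_{i^*})$ to choose $(\ell_1,\ell_2)$ so the boundary curves have total length $<2\eps$, cap them by the acyclicity hypothesis inside their $L$-neighborhoods, and check by triangle-inequality bookkeeping that these caps stay at distance $\ge L$ from $\gamma$ and from every other $\eta_j$. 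The resulting $2$-cycle has nonzero algebraic intersection with the null-homologous $c$, a contradiction. So the contradiction comes from an intersection-number/linking argument against a \emph{dependent} combination, not from forcing \emph{independent} classes or from any width estimate on a cover.
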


\begin{proof}
We closely follow the proof in~\cite[Section~2]{chodosh21}, which is based on a preliminary version of the present proof; another ingredient is based on~\cite[Theorem~1.3]{balitskiy21}. Fix $p \in M$ and consider the metric spheres $S_t(p)$, $t > 0$. Suppose to the contrary that one of them, for some radius $t > 6L$, has a connected component $C$ of diameter greater than $12L(\beta + 1)$. We pretend that this component is path-connected, so that there is a path $\gamma$ in $S_t(p)$ connecting two points that are greater than distance $12L(\beta + 1)$ apart. If $C$ is not path-connected, one can pick $\gamma$ inside its small open neighborhood, and all the inequalities below will still hold up to an error summand, which can be made arbitrarily small.

We can find points $x_0, x_1, \ldots, x_{\beta + 1}$ along $\gamma$ such that the distance in $M$ between every two of them is greater than $12L$.  For each $i = 0, \ldots, \beta + 1$, let $\eta_i$ be the minimizing geodesic of length $t$, connecting $x_i$ to $p$.  
We have $\beta + 1$ triangles, each consisting of consecutive geodesics $\eta_{i-1}$ and $\eta_i$ together with the portion of $\gamma$ from $x_{i-1}$ to $x_i$.  Their classes in $H_1(M; \mathbb{Q})$ must be linearly dependent, so some nontrivial $\mathbb{Z}$-linear combination of them must be zero in homology.  We let $c$ be such a homologically trivial cycle.  There is some $i^*$ such that the net contribution of $\eta_{i^*}$ to $c$ has nonzero coefficient.  The idea of our proof is to construct a $2$-cycle that has nonzero algebraic intersection number with $c$, by intersecting $c$ transversely once along $\eta_{i^*}$ and nowhere else.

First we perturb the functions $d(x_{i^*}, \cdot)$ and $d(\eta_{i^*}, \cdot)$, approximating them by smooth functions.  Because the perturbation can be arbitrarily small, we do not keep track of it in the proof, pretending instead that these functions are already smooth.  We claim that there is a choice $(\ell_1, \ell_2) \in (0, L)\times (0, L/2)$ for which the following properties hold:
\begin{enumerate}
\item $S_{4L + \ell_1}(x_{i^*})$ is an embedded surface;
\item $S_{4L + \ell_1}(x_{i^*}) \cap S_{L + \ell_2}(\eta_{i^*})$ is an embedded union of disjoint loops; and
\item these loops have total length at most $2\varepsilon$.
\end{enumerate}
For the first two properties, by Sard's theorem we know that the regular values of the (assumed smooth) functions $d(x_{i^*}, \cdot)$ and $d(x_{i^*}, \cdot) \times d(\eta_{i^*}, \cdot)$ have full measure.  For the third property, we use the coarea inequality on the function $f(q) = (d(x_{i^*}, q) - 4L, d(\eta_{i^*}, q) -L)$, which is $1$-Lipschitz in each coordinate:
\begin{align*} \int\limits_{(r_1, r_2) \in (0, L)\times(0,L/2)} \Len(f^{-1}(r_1, r_2))\ dr_1dr_2 \leq& \Vol (f^{-1}((0, L)\times(0,L/2))\\
 \leq& \Vol (B_{5L}(x_{i^*})) < \delta.
 \end{align*}
 We may choose a pair $(\ell_1, \ell_2)$ such that the corresponding union of loops $f^{-1}(\ell_1, \ell_2)$ has at most average total length, and thus has total length less than $\frac{2\delta}{L^2} = 2\varepsilon$, while simultaneously guaranteeing that $\ell_1$ and $(\ell_1, \ell_2)$ are regular values.  Thus, the three desired properties hold.
 
 Let $\Sigma$ be the connected component of $S_{4L + \ell_1}(x_{i^*}) \cap B_{L + \ell_2}(\eta_{i^*})$ that intersects $\eta_{i^*}$.  That intersection is orthogonal and is at a single point.  Let $\Gamma$ be the union of those components of $S_{4L + \ell_1}(x_{i^*}) \cap S_{L + \ell_2}(\eta_{i^*})$ that constitute the boundary of $\Sigma$.  Because $\Gamma$ has length at most $2\varepsilon$, each of its components is contained in a ball of radius $\varepsilon$, and so our hypothesis guarantees that $\Gamma$ can be filled in its $L$-neighborhood by a $2$-cycle $\Sigma'$.  We will be done if we show that $\Gamma$ stays at least distance $L$ from $c$, because then gluing $\Sigma$ and $\Sigma'$ along $\Gamma$ produces an integral $2$-cycle with nonzero intersection number with $c$.  This would contradict our construction of $c$ as null-homologous.
 
 First we check that $\Gamma$ stays at least distance $L$ away from $\gamma$.  Let $s$ be a point on $\Gamma$.  There is a point $e$ on $\eta_{i^*}$ with $d(e, s) \leq L + \ell_2$.  By the triangle inequality on $x_{i^*}$, $s$, and $e$, we have $d(x_{i^*}, e) \geq 3L + \ell_1 - \ell_2$.  Because $d(x_{i^*}, e) + d(e, p) = t$, the triangle inequality on $e$, $p$, and $s$ gives $d(p, s) \leq t - 2L + 2\ell_2 - \ell_1\leq t - L$, and so $s$ must be at least distance $L$ from $S_t(p)$ and thus from $\gamma$.
 
 Next we check that $\Gamma$ stays at least distance $L$ away from each $\eta_j$ with $j \neq i^*$.  Let $s$ be a point on $\Gamma$, and suppose that there is a point $e$ on $\eta_j$ with $d(e, s) \leq L$.  Then the triangle inequality on $x_{i^*}$, $s$, and $e$ gives $d(x_{i^*}, e) \leq 5L + \ell_1$, so the triangle inequality on $x_{i^*}$, $e$, and $p$ gives $d(p, e) \geq t - (5L + \ell_1)$.  Because $d(p, e) + d(e, x_j) = t$, we have $d(e, x_j) \leq 5L + \ell_1$.  Thus, the triangle inequality on $x_{i^*}$, $s$, and $x_j$ gives $d(x_{i^*}, x_j) \leq 10L + 2\ell_1$, contradicting the assumption $d(x_{i^*}, x_j) > 12L$.
 
 Thus, filling $\Gamma$ in its $L$-neighborhood and adding this filling to $\Sigma$ gives a $2$-cycle with nontrivial algebraic intersection with the cycle $c$, contradicting the fact that $c$ is null-homologous.
\end{proof}

\begin{remark}\label{non-orient}
If $M$ is not orientable, we can do the same proof with $\mathbb{Z}/2$ coefficients.  Instead of assuming that every loop contained in a ball of radius $\varepsilon$ is integrally null-homologous in the concentric ball of radius $L$, we assume that it is null-homologous in the sense of $\mathbb{Z}/2$ coefficients.  And, instead of taking $\beta = \dim H_1(M; \mathbb{Q})$, we take $\beta' = \dim H_1(M; \mathbb{Z}/2)$.  Then each connected component of each level set of $d(p, \cdot)$ has diameter at most $12L(\beta' + 1)$.
\end{remark}

\begin{remark}\label{cll}
The same trick can be used to prove a genus-dependent version of~\cite[Proposition~7]{chodosh21}. Namely, the following holds. Let $M$ be an complete oriented $n$-dimensional Riemannian manifold such that any integral $(n-2)$-cycle can be filled in its $L$-neighborhood. Then $\UW_{n-2}(M) \le 12L(\beta + 1)$, where $\beta = \dim H_1(M; \mathbb{Q})$ is the first Betti number. (In~\cite{chodosh21}, $M$ is the universal cover of a manifold, so $\beta=0$.) Similarly to Remark~\ref{non-orient}, a mod~$2$ version of this result also holds.
\end{remark}

\section{Open questions}

The main question we are interested in is whether Conjecture~\ref{conj-main} holds. To start off, we would be interested to know if the dependence on the Betti number is redundant in Theorem~\ref{thm-main}.  We would also be interested to resolve an alternative version of Conjecture~\ref{conj-main}, stated in terms of universal covers of balls, as follows.

\begin{conjecture}\label{conj-balls}
There exist positive dimensional constants $\delta_n$, $C_n$, and $R_n$ such that the following holds. Let $M$ be a closed Riemannian manifold of dimension $n$.  For each point $p$ in $M$, we consider the universal cover of the ball $B_{2R_n}(p)$, and we consider the volume of the ball $B_{R_n}(\widetilde{p})$ in this universal cover, where $\widetilde{p}$ denotes a lift of $p$.  Suppose that this ball has volume at most $\delta_n$ for every choice of $p$.  Then $\UW_{n-2}(M) \leq C_n$.
\end{conjecture}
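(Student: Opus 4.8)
The plan is to emulate the proof of Theorem~\ref{thm-3dim}, carrying out the filling and intersection bookkeeping in the universal covers of balls of bounded radius rather than in $M$ itself, and to use the extra strength of this hypothesis to remove the Betti-number factor. Choose dimensional constants with $R_n$ a large multiple of an auxiliary constant $L_n$ and $\delta_n$ small relative to $R_n^n$. First note that the hypothesis is already stronger than the volume hypothesis of Conjecture~\ref{conj-main}: since the covering projection $\widetilde{B_{2R_n}(p)}\to B_{2R_n}(p)$ carries $B_{R_n}(\widetilde p)$ onto $B_{R_n}(p)$, one gets $\Vol B_{R_n}(p)\le\delta_n$ for every $p$, so a rescaled Theorem~\ref{guth} already yields $\UW_{n-1}(M)\le R_n$ for free; the real content is to push one dimension further.

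The main steps I would carry out, for $n=3$ first, are: (i) from the small-volume bound in $B_{R_n}(\widetilde p)$, establish a \emph{local filling} statement via Gromov's filling-radius inequality applied inside each $\widetilde{B_{2R_n}(p)}$---roughly, a short loop of $M$ contained in a small ball, once lifted to a loop in the relevant universal cover of a $2R_n$-ball, bounds a chain in its $L_n$-neighborhood after projecting---playing the role of the acyclicity hypothesis of Conjecture~\ref{conj-main}; (ii) run the cutting argument verbatim: if some connected component of some level set $S_t(p)$ had diameter $>C_n$, choose points $x_0,\dots,x_N$ along it pairwise more than $12L_n$ apart, join each to $p$ by a minimizing geodesic $\eta_i$, apply the coarea inequality to $d(x_{i^*},\cdot)\times d(\eta_{i^*},\cdot)$ to produce a short cycle $\Gamma$ bounding a local piece $\Sigma$ of a distance sphere that meets $\eta_{i^*}$ once transversally, fill $\Gamma$ in its $L_n$-neighborhood by step~(i), and use the paper's triangle-inequality estimates to see the filling stays $L_n$-far from the cycle one intersects against, yielding the contradiction; (iii) conclude that components of level sets of $d(p,\cdot)$ have diameter $\le C_n$, so collapsing them gives a map to a graph with $C_n$-small fibers, i.e.\ $\UW_1(M)\le C_n$ and hence $\UW_{n-2}(M)\le C_n$. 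The new point is that, working upstairs, the ``triangles'' never need to be globally nullhomologous---each is confined to a bounded region---so the intersection obstruction can be built one triangle at a time, which is exactly what removes the $\beta=\dim H_1(M;\mathbb{Q})$ factor of Theorem~\ref{thm-main}. For $n\ge4$ the cutting argument must be replaced or supplemented, since it would want to fill the $(n-2)$-dimensional cycle $\Gamma$ while step~(i) controls only $1$-cycles and the coarea estimate makes only the $(n-2)$-volume of $\Gamma$ small; there the natural route is to prove directly that the local hypothesis propagates to a global filling estimate---every integral $(n-2)$-cycle of $M$ bounds in its $O(L_n)$-neighborhood---by chopping a large cycle into bounded pieces, filling each piece locally using the filling-radius bound in the universal cover of the relevant bounded ball, and reassembling, after which Remark~\ref{cll} (with $\beta$ irrelevant, as in \cite{chodosh21}) finishes the argument.

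The step I expect to be the main obstacle is reconciling the \emph{locality} of the hypothesis---it constrains only balls of the fixed radius $2R_n$---with the \emph{globality} of the conclusion. In the cutting argument the geodesics $\eta_i$ have length $t$, which is unbounded, so a ``triangle'' is a long loop, and neither the acyclicity hypothesis of Conjecture~\ref{conj-main} nor the universal-cover hypothesis says anything about how long loops fill; in Theorem~\ref{thm-main} this is precisely what is paid for by spending one dimension of $H_1(M;\mathbb{Q})$ per triangle. To avoid that one must confine the whole construction to a region of bounded size where the local hypothesis bites, and I expect this is where the hypothesis that $M$ is \emph{closed} has to be used---say by replacing the cycle $c$ one intersects against by a least-area or otherwise minimal representative of a suitable homology class, or by a compactness/limiting argument. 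Making this precise, which is essentially the same as proving the local-to-global filling propagation needed for $n\ge4$, is the crux; a reasonable first milestone is the $n=3$ case, i.e.\ removing the Betti-number dependence from Theorem~\ref{thm-main} under this stronger hypothesis.
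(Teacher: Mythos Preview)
This statement is \emph{Conjecture~\ref{conj-balls}}, which the paper explicitly leaves open; there is no proof in the paper to compare your proposal against. Moreover, the paragraph immediately following the conjecture explains why the method of Theorem~\ref{thm-3dim} does not adapt: that proof needs two radii, a large one $5L$ on which volumes are small and a strictly smaller one $L$ inside which short loops can be filled, whereas under the hypothesis of Conjecture~\ref{conj-balls} the only filling one gets for free is in the universal cover of $B_{2R_n}(p)$, which after projecting lands in a $2R_n$-neighborhood---larger, not smaller, than the volume-control radius $R_n$.

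Your step~(i) is exactly where this bites, and it is not justified. Knowing that $\Vol B_{R_n}(\widetilde p)$ is small does not yield that a short loop, once lifted, bounds in its $L_n$-neighborhood for some $L_n\ll R_n$: simple connectivity of $\widetilde{B_{2R_n}(p)}$ guarantees a filling \emph{somewhere} in that cover, but gives no control on how far the filling spreads, and Gromov's filling-radius inequality does not apply in this setting to force the filling into a ball of radius $\ll R_n$. (A simply connected $3$-manifold can contain a short meridian that only bounds far away---think of a long thin capped tube.) Without step~(i) the rest of the cutting argument has nothing to stand on: the $2$-chain $\Sigma'$ capping $\Gamma$ may meet the other geodesics $\eta_j$, destroying the intersection-number contradiction. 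You correctly flag the global-vs-local tension in your last paragraph, but the more immediate gap is already at the local filling step, and it is the same obstruction the authors point out. A genuine proof would need a new idea bridging the radius mismatch, not a transcription of the Theorem~\ref{thm-3dim} argument.
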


Our proof approach in this paper is not enough to prove a result with a hypothesis of this form, because the proof relies on two different radii: $5L$, which is the radius of balls that are guaranteed to have small volume, and a smaller $L$, which is the radius of a filling of a short loop.  In the alternative conjecture, the constant $R_n$ takes the role of $5L$, and $2R_n$ takes the role of $L$, so the filling may extend too far for our proof to work.

To remove the dependence on Betti number in Theorem~\ref{thm-main}, one approach could be to apply the theorem to the universal cover of our $3$-manifold, and then hope that if the universal cover has small $1$-width, then so does the original manifold.  Deciding whether this approach is viable would require resolving the following question.

\begin{question}
Do there exist $3$-dimensional closed Riemannian manifolds $M$ such that $\UW_1(\widetilde{M})$ is bounded, but $\UW_1(M)$ is arbitrarily large?
\end{question}


Even if the answer is negative, a similar approach to Conjecture~\ref{conj-main} will have to fail in higher dimensions, as there are closed Riemannian manifolds $M^n$, $n \ge 4$, such that $\UW_{n-2}(M^n) \gg \UW_{n-2}(\widetilde M^n)$. We exhibit such a manifold for $n=4$. In higher dimensions, one can take the product of this example with, say, a large round sphere.
\begin{example}
In $\mathbb{R}^5$, take the standard cubic grid with the vertices in $\mathbb{Z}^5$. Let $Z$ be the two-dimensional skeleton of this grid, and let $Z^\vee$ be the two-dimensional skeleton of the dual grid, that is, $Z^\vee = Z + (\frac12, \ldots, \frac12)$. Let $\widetilde{M}$ consist of the points
equidistant from $Z$ and $Z^\vee$. After a slight smoothing, $\widetilde{M}$ becomes a $4$-dimensional Riemannian manifold. Notice that it is simply-connected, since it is homotopy equivalent to $N = \mathbb{R}^5 \setminus (Z \cup Z^\vee)$, and every loop in $N$ can be contracted avoiding $Z \cup Z^\vee$. Pick an integer $R \gg 1$. The manifold $M$ is defined as the quotient of $\widetilde{M}$ by the lattice $\Lambda$ generated by the following vectors: $v_1 = (R,0,0,0,0)$, $v_2 = (0,R,0,0,0)$, $v_3 = (0,0,R,0,0)$, $v_4 = (0,0,0,R,0)$, $v_5 = (\frac12,\frac12,\frac12,\frac12,\frac12+R)$. These five  translations preserve $\widetilde{M}$, and the last one swaps $Z$ and $Z^\vee$. Gromov's ``fiber contraction'' argument can be used to show that $\UW_3(M)$ is of order $R$. Namely, one can apply \cite[Corollary~2.3]{balitskiy21} to the evident inclusion of $M$ in the $5$-dimensional torus
$\mathbb{R}^5 / \Lambda$; it is non-trivial at the level of $H_4(\cdot;\mathbb{Z}/2)$, since a generic circle parallel to $v_5$ in the torus intersects $M$ an odd number of times. Therefore, the width $\UW_2(M)$, sandwiched between $\UW_3(M)$ and $\diam M$, is of order $R$ as well. As for the universal cover $\widetilde{M}$, it can be projected to $Z$ with fibers of size $\approx 1$, so $\UW_2(\widetilde M)$ is of order $1$.
\end{example}




The acyclicity control assumption is somewhat reminiscent of the conclusion of~\cite[Theorem~10.7]{gromov83}, where it is proven that in presence of positive scalar curvature \emph{every null-homologous loop bounds nearby}. This type of assumption for (not just $1$-dimensional) cycles, without any volumetric bounds, might be a good candidate for the role of ``positive macroscopic curvature''. Inspired by the work of Alexandrov~\cite{alexandroff33}, one can define the \emph{$k$-dimensional homological width} of a path metric space $X$ as follows: 
\[
\HW_k(X; R) = \sup\limits_{Z \in \mathcal{B}_k(X;R)} \inf\{r \ge 0 ~\vert~ Z \text{ bounds a $(k+1)$-chain in } B_r(\supp Z)\}.
\]
Here $R$ is the coefficient ring of singular homology; $\mathcal{B}_k(X;R)$ is the space of $k$-boundaries (null-homologous singular $k$-cycles) in $X$; $\supp Z$ is the subset of $X$ where the images of the singular simplices of $Z$ land. 

\begin{conjecture}[cf.~\cite{alexandroff33}]
For some dimensional constant $c_k$,
\[\UW_k(X) \le c_k \HW_k(X; \mathbb{Q}/\mathbb{Z}).\]
\end{conjecture}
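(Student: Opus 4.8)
The statement is soft---no volumes or Lipschitz constants enter---so the natural line of attack is dimension-theoretic, in the spirit of Alexandroff rather than of the quantitative arguments above. The plan is to reformulate a bound $\HW_k(X;\mathbb{Q}/\mathbb{Z})<w$ as the assertion that the canonical map of $X$ to the nerve of an arbitrarily fine cover can be compressed into its $k$-skeleton at geometric cost of order $w$, and to read off from this a small-fiber map to a $k$-dimensional complex. One may assume $X$ is compact (the general case follows by a standard exhaustion-and-patching argument). Fix $\delta>0$, choose a finite open cover $\mathcal{U}$ of mesh $\rho\ll\delta$ with nerve $\mathcal{N}$---of possibly enormous dimension $N$---and the partition-of-unity map $\phi_0\co X\to\mathcal{N}$, whose point-preimages have diameter $<\rho$. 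The goal is to manufacture from $(\mathcal{N},\phi_0)$ a pair $(P,f)$ with $\dim P=k$ and $\diam(f^{-1}(\sigma))\le c_k w+\delta$ for every simplex $\sigma$ of $P$; composing $f$ with a partition of unity subordinate to the open stars of $P$ then gives $\UW_k(X)\le c_k w+\delta$.

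The core would be an inductive descent lowering the top dimension of the target from $N$ to $k$. Given $f_j\co X\to P_j$ with $\dim P_j=j>k$, one wants to homotope $f_j$ into the skeleton $P_j^{(j-1)}$; the primary obstruction is a degree-$j$ cohomology class of $X$, and because every relevant set lives at the mesh scale, a nonvanishing obstruction should dualize to a cycle, supported at scale $\rho$, of exactly the type the definition of $\HW$ forbids: null-homologous in $X$ but not bounding within its $w$-neighborhood. This dualization is where the coefficient group matters: $\mathbb{Q}/\mathbb{Z}$ is injective, so $H^\ast(\,\cdot\,;\mathbb{Q}/\mathbb{Z})$ is Pontryagin-dual to integral homology and every cohomology class is detected by pairing with integral cycles. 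The filling guaranteed by the hypothesis then supplies precisely the geometric data needed to perform the compression, inflating the affected preimages by $O(w)$; after finitely many steps one reaches dimension $k$.

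When $X$ is a closed manifold one could instead adapt the intersection- and linking-number argument behind Theorem~\ref{thm-3dim} and Remark~\ref{cll}: from the assumption that $\UW_k$ is large at scale $w$, extract a null-homologous cycle whose filling would glue up to an absolute cycle with nonzero intersection or linking number against a second null-homologous cycle, a contradiction. In Remark~\ref{cll} the Betti number enters only in choosing finitely many loops and using finite-dimensionality of $H_1(M;\mathbb{Q})$ to form a null-homologous combination; the passage to $\mathbb{Q}/\mathbb{Z}$ coefficients and the removal of the Betti-number dependence are plausibly linked, since $\mathbb{Q}/\mathbb{Z}$-duality makes the relevant classes detectable with no finite-dimensionality input.

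I expect the main obstacle to be the dimension mismatch in the descent: $\HW_k$ controls fillings of $k$-cycles only, whereas the naive induction wants to fill $(j-1)$-cycles for every $j$ from $N$ down to $k+1$, and nothing bounds $\HW_{j-1}$ for $j-1>k$ in terms of $\HW_k$. Circumventing this seems to require either a one-stroke Federer--Fleming-type deformation of $\phi_0$ into a $k$-complex whose deformation chains are assembled entirely from the $k$-dimensional fillings supplied by the hypothesis---so that higher fillings are never invoked---or a bootstrap showing that for maps arising from fine covers the higher obstructions are automatically carried by $k$-dimensional data. Two further points need care: keeping the total inflation of order $c_k w$ with $c_k$ depending only on $k$, rather than growing with the nerve; and turning the singular-chain fillings of the definition into diameter-controlled nullhomotopies, using that $X$ is a path metric space to approximate chains by maps of complexes of controlled mesh, as in the coarea and deformation estimates used elsewhere in this paper. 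Finally, $\HW_k(\,\cdot\,;\mathbb{Q}/\mathbb{Z})$ is a cohomological-dimension-flavored quantity while $\UW_k$ is a covering-dimension-flavored one, and classically these can disagree, so the use of \emph{singular} chains together with the requirement that fillings stay near the \emph{support} of the cycle must be doing essential work; confirming that it suffices is the conceptual heart of the matter.
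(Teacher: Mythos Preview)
The statement you are addressing is a \emph{conjecture} in the paper, appearing in the ``Open questions'' section; the paper offers no proof, so there is nothing to compare your attempt against. Your write-up is not a proof either, and you say so yourself: it is a strategic outline that ends by identifying the obstacle it cannot overcome.

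That said, your diagnosis of the difficulty is accurate. The nerve-compression scheme you describe is the standard Alexandroff-style template, and the point where it breaks is exactly the one you flag: pushing $\phi_0$ from the $j$-skeleton into the $(j-1)$-skeleton for $j>k+1$ requires filling $(j-1)$-cycles, and a bound on $\HW_k$ says nothing about $\HW_{j-1}$ for $j-1>k$. Your two suggested escapes---a one-shot Federer--Fleming deformation built solely out of $k$-dimensional fillings, or a bootstrap showing higher obstructions for fine-cover maps are carried by $k$-dimensional data---are plausible directions but are not arguments; neither is known to work, and the paper's authors presumably would have proved the conjecture if either were routine. The remark that $\UW_k$ is covering-dimension-flavored while $\HW_k$ is cohomological-dimension-flavored is apt: the Pontryagin-surface example immediately following the conjecture in the paper shows that with $\mathbb{Z}$ coefficients the inequality fails, so the choice of $\mathbb{Q}/\mathbb{Z}$ is doing real work, and your proposal does not isolate where that work would enter the compression argument beyond the general observation about injectivity and duality.

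In short: there is no gap to name relative to the paper, because the paper claims nothing here; relative to a proof, the gap is the entire inductive step, which you have correctly located but not closed.
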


A version of this question for $k=1$ was asked in~\cite{gromov20}, apparently assuming integer coefficients. We notice though that $\UW_1(X)$ can be much greater than $\HW_1(X; \mathbb{Z})$. Such an example was given by Pontryagin \cite{pontryagin30} in the context of dimension theory: the \emph{Pontryagin surfaces} have topological dimension $2$ but homological dimension $1$ (modulo various coefficients). The Pontryagin surfaces are not manifolds, but a finite approximation of one of them is a manifold and will do for our purposes.\footnote{Aleksandr Berdnikov came up with this example without knowing about Pontryagin's work.}

\begin{example}
Take a large sphere $S^2(R)$ with round metric and distribute points $p_i \in S^2(R)$ so that the balls $B_1(p_i)$ do not overlap but the balls $B_3(p_i)$ cover the sphere. At each $p_i$, punch a small hole and glue a cross-cap (a M\"obius band) instead. We use the same fixed cross-cap model of diameter less than $1$. The resulting connected sum of projective planes is a Riemannian manifold $M$ satisfying the following properties. First, $\UW_1(M)$ is of order $R$, as one can show using ``fiber contraction''; \cite[Corollary~2.3]{balitskiy21} can be applied to the pinching map $M \to S^2(R)$ and mod $2$ homology. Second, $\HW_1(M; \mathbb{Z})$ is of order $1$, as we now show. Suppose $\Sigma$ is an integral $2$-chain whose boundary $1$-cycle is $\gamma$. Suppose there is a point $p \in \Sigma$ that is distance at least $4$ away from $\gamma$. There is an open neighborhood $p \in U \subset B_4(p)$ that contains exactly one of the cross-caps. The image of $\Sigma$ in the relative homology $H_2(M, M\setminus U; \mathbb{Z})$ must be non-zero, but this homology is trivial, so we get a contradiction.
\end{example}

\bibliography{codim2-bib}
\bibliographystyle{amsalpha}
\end{document}